
\documentclass{amsart}
\usepackage{amsfonts}

\setcounter{MaxMatrixCols}{10}

\newtheorem{theorem}{Theorem}
\theoremstyle{plain}

\numberwithin{equation}{section}
\input{tcilatex}

\begin{document}
\title[The detailed proof of theorem]{The detailed proof of theorem which
characterizes a slant helix}
\author{Fatih Do\u{g}an}
\address{\textit{Current Adress: Fatih} \textit{DO\u{G}AN}, \textit{Ankara
University, Department of Mathematics, 06100 Tando\u{g}an, Ankara, Turkey}}
\email{mathfdogan@hotmail.com}
\date{}
\subjclass[2000]{53A04}
\keywords{Slant helix; Principal normal indicatrix; The axis; Geodesic
curvature}

\begin{abstract}
In this paper, firstly the axis of a slant helix is found with a method.
Secondly, the theorem which characterizes a unit speed curve to be a slant
helix is proved in detail. The importance of this theorem is stemed from
that it has led to many papers regarding slant helices in the differential
geometry literature.
\end{abstract}

\maketitle

\section{Introduction}

Slant helices are special space curves like general helices, Salkowski
curves, Bertrand curves and Mannheim curves. Slant helix concept was first
introduced by Izumiya and Takeuchi and then studied by so many authors.

Izumiya and Takeuchi $[4]$ defined slant helices which are generalizations
of the notion of general helices. Kula and Yayl\i \ $[5]$ investigated the
spherical indicatrices of slant helices and showed that tangent and binormal
indicatrices of them are spherical helices. Also, Kula $et.al.$ $[6]$ gave
some characterizations for a unit speed curve $\gamma $ in $\mathbb{E}^{3}$
to be a slant helix by applying its tangent indicatrix, principal normal
indicatrix and binormal indicatrix. Ali and Turgut $[2]$ extended the notion
of slant helix from Euclidean 3-space to Euclidean n-space. They introduced
type-2 harmonic curvatures of a regular curve and by using this give
necessary and sufficient conditions for a curve to be a slant helix in
Euclidean n-space.

Recently, Ali and Turgut $[1]$ researched position vector of a timelike
slant helix in Minkowski 3-space $\mathbb{E}_{1}^{3}$. They determined the
parametric representation of the position vector $\psi $ from intrinsic
equations in $\mathbb{E}_{1}^{3}$ for the timelike slant helix. More
recently, Ali and Lopez $[3]$ looked into slant helices in Minkowski
3-space. They gave characterizations for spacelike, timelike and lightlike
slant helices, also investigated tangent indicatrix, binormal indicatrix and
involutes of non-null curve.

Monterde $[7]$ studied Salkowski curves and characterized them such that
principal normal vector makes a constant angle with a fixed line.

This paper is organized as follows. Section 2 presents basic concepts
concerning curve and surface theory in $\mathbb{E}^{3}$. Firstly, the axis
of a slant helix is found with a method and secondly, the theorem which
characterizes a unit speed curve to be a slant helix is proved in detail.

\section{Preliminaries}

In this section, we give some basic notions about curves and surfaces. The
differential geometry of curves starts with a smooth map of $s$, let us call 
$\alpha :I\subset 
\mathbb{R}
\longrightarrow \mathbb{E}^{3}$ that parametrized a spatial curve denoted
again with $\alpha $. We say that the curve is parametrized by arc-lenght if 
$\left \Vert \alpha ^{^{\prime }}(s)\right \Vert =1$ (unit speed), where $%
\alpha ^{^{\prime }}(s)$ is the first derivative of $\alpha $ with respect
to $s$. Let $\alpha :I\subset 
\mathbb{R}
\longrightarrow \mathbb{E}^{3}$ be a regular curve with an arc-lenght
parameter $s$ and $\kappa (s)=\left \Vert \alpha ^{^{\prime \prime
}}(s)\right \Vert >0$, where $\kappa $ is the curvature of $\alpha $ and $%
\alpha ^{^{\prime \prime }}$ is the second derivative of $\alpha $ with
respect to $s$. Since the curvature $\kappa $ is nonzero, the Frenet frame $%
\{T,$ $N,$ $B\}$ is well-defined along the curve $\alpha $ and as follows.%
\begin{gather}
T(s)=\alpha ^{^{\prime }}(s)  \tag{2.1} \\
N(s)=\frac{\alpha ^{^{\prime \prime }}(s)}{\left \Vert \alpha ^{^{\prime
\prime }}(s)\right \Vert }  \notag \\
B(s)=T(s)\times N(s),  \notag
\end{gather}%
where $T$, $N$ and $B$ are the tangent, the principal normal and the
binormal of $\alpha $, respectively. For a unit speed curve with $\kappa >0$
the derivatives of the Frenet frame (Frenet equations) are given by%
\begin{gather}
T^{^{\prime }}(s)=\kappa (s)N(s)  \tag{2.2} \\
N^{^{\prime }}(s)=-\kappa (s)T(s)+\tau (s)B(s)  \notag \\
B^{^{\prime }}(s)=-\tau (s)N(s),  \notag
\end{gather}%
where $\tau (s)=\dfrac{\left \langle \alpha ^{^{\prime }}(s)\times \alpha
^{^{\prime \prime }}(s),\alpha ^{^{\prime \prime \prime }}(s)\right \rangle 
}{\kappa ^{2}}$ is the torsion of $\alpha $; "$\times $" is the cross
product on $%
\mathbb{R}
^{3}$.

Let $\mathbb{M}$ be a regular surface and $\alpha :I\subset 
\mathbb{R}
\longrightarrow \mathbb{M}$ be a unit speed curve. Then, the Darboux frame $%
\{T,$ $Y=U\times T,$ $U\}$ is well-defined along the curve $\alpha $, where $%
T$ is the tangent of $\alpha $ and $U$ is the unit normal of $\mathbb{M}$.
Darboux equations of this frame are given by%
\begin{gather}
T^{^{\prime }}=k_{g}Y+k_{n}U  \tag{2.3} \\
Y^{^{\prime }}=-k_{g}T+\tau _{g}U  \notag \\
U^{^{\prime }}=-k_{n}T-\tau _{g}Y,  \notag
\end{gather}%
where $k_{n}$, $k_{g}$ and $\tau _{g}$ are the normal curvature, the
geodesic curvature and the geodesic torsion of $\alpha $, respectively. With
the above notations, let $\phi $ denote the angle between the surface normal 
$U$ and the binormal $B$. Using equations in (2.3) we reach%
\begin{gather}
\kappa ^{2}=k_{g}^{2}+k_{n}^{2}  \tag{2.4} \\
k_{g}=\kappa \cos \phi  \notag \\
k_{n}=\kappa \sin \phi  \notag \\
\tau _{g}=\tau -\phi ^{^{\prime }}.  \notag
\end{gather}%
If we rotate the Darboux frame $\{T,$ $Y=U\times T,$ $U\}$ by $\phi $ about $%
T$, we obtain the Frenet frame $\{T,$ $N,$ $B\}$.%
\begin{equation*}
\left[ 
\begin{array}{c}
T \\ 
N \\ 
B%
\end{array}%
\right] =\left[ 
\begin{array}{ccc}
1 & 0 & 0 \\ 
0 & \cos \phi & \sin \phi \\ 
0 & -\sin \phi & \cos \phi%
\end{array}%
\right] \left[ 
\begin{array}{c}
T \\ 
Y \\ 
U%
\end{array}%
\right]
\end{equation*}%
\begin{eqnarray*}
T &=&T \\
N &=&\cos \phi Y+\sin \phi U \\
B &=&-\sin \phi Y+\cos \phi U.
\end{eqnarray*}%
From the above equations, we obtain%
\begin{gather}
Y=\cos \phi N-\sin \phi B  \tag{2.5} \\
U=\sin \phi N+\cos \phi B  \notag
\end{gather}%
Let $\alpha :I\subset 
\mathbb{R}
\longrightarrow \mathbb{E}^{3}$ be a regular curve with an arc-lenght
parameter $s$ and $\kappa >0$. Since $\left \Vert N\right \Vert =1$, $\beta
:I\subset \mathbb{R}\longrightarrow \mathbb{S}^{2}$, $\beta (s)=N(s)$ along
the curve $\alpha $ lies on the unit sphere $\mathbb{S}^{2}$. The curve $%
\beta $ is called the principal normal indicatrix of $\alpha $.

\section{A Slant helix and its axis}

In this section, we find the fixed vector (axis) of a slant helix with a
method and prove theorem which characterizes a unit speed curve to be a
slant helix in detail. A space curve which its principal normal vectors make
a constant angle with a fixed vector is called a slant helix. Let $\alpha
:I\subset 
\mathbb{R}
\longrightarrow \mathbb{E}^{3}$ be a unit speed slant helix. Then, by the
definition of slant helix%
\begin{equation}
\left \langle N,d\right \rangle =\cos \theta ,  \tag{3.1}
\end{equation}%
where $N$ is the principal normal of $\alpha $, $d$ is the fixed vector of $%
\alpha $ and $\theta $ is the constant angle between $N$ and $d$. If we
differentiate Eq.(3.1) with respect to $s$ along the curve $\alpha $ and
then use Frenet equations, we obtain%
\begin{equation}
\left \langle N^{^{\prime }},d\right \rangle =0  \tag{3.2}
\end{equation}%
\begin{equation*}
\left \langle -\kappa T+\tau B,d\right \rangle =0
\end{equation*}%
\begin{equation*}
\kappa \left \langle T,d\right \rangle =\tau \left \langle B,d\right \rangle 
\end{equation*}%
\begin{equation*}
\left \langle T,d\right \rangle =\frac{\tau }{\kappa }\left \langle
B,d\right \rangle .
\end{equation*}%
If we take $\left \langle B,d\right \rangle =c$, we get%
\begin{equation*}
d=\frac{\tau }{\kappa }cT+\cos \theta N+cB.
\end{equation*}%
Since $\left \Vert d\right \Vert =1$, it follows that%
\begin{gather*}
\frac{\tau ^{2}}{\kappa ^{2}}c^{2}+\cos ^{2}\theta +c^{2}=1 \\
(\frac{\tau ^{2}}{\kappa ^{2}}+1)c^{2}=\sin ^{2}\theta  \\
c=\mp \frac{\kappa }{\sqrt{\kappa ^{2}+\tau ^{2}}}\sin \theta .
\end{gather*}%
Therefore, $d$ can be written as%
\begin{equation}
d=\mp \frac{\tau }{\sqrt{\kappa ^{2}+\tau ^{2}}}\sin \theta T+\cos \theta
N\mp \frac{\kappa }{\sqrt{\kappa ^{2}+\tau ^{2}}}\sin \theta B.  \tag{3.3}
\end{equation}%
Actually, $d$ is a constant vector. By differentiating $N^{^{\prime }}$ and
Eq.(3.2) with respect to $s$ along the curve $\alpha $, we obtain%
\begin{gather}
\left \langle N^{^{^{\prime \prime }}},d\right \rangle =0  \notag \\
\left \langle \kappa ^{^{\prime }}T+(\kappa ^{2}+\tau ^{2})N-\tau ^{^{\prime
}}B,\mp \frac{\tau }{\sqrt{\kappa ^{2}+\tau ^{2}}}\sin \theta T+\cos \theta
N\mp \frac{\kappa }{\sqrt{\kappa ^{2}+\tau ^{2}}}\sin \theta B\right \rangle
=0  \notag \\
\mp \frac{\kappa ^{^{\prime }}\tau -\tau ^{^{\prime }}\kappa }{(\kappa
^{2}+\tau ^{2})^{3/2}}\tan \theta +1=0  \notag \\
\tan \theta =\mp \frac{(\kappa ^{2}+\tau ^{2})^{3/2}}{\tau ^{^{\prime
}}\kappa -\kappa ^{^{\prime }}\tau }  \notag \\
\cot \theta =\mp \frac{\kappa ^{2}}{(\kappa ^{2}+\tau ^{2})^{3/2}}(\frac{%
\tau }{\kappa })^{^{\prime }}.  \tag{3.4}
\end{gather}%
From Eq.(3.3) and Frenet equations, the derivative of $d$ becomes%
\begin{eqnarray*}
d^{^{\prime }} &=&\mp \left[ \sin \theta (\frac{\tau }{\sqrt{\kappa
^{2}+\tau ^{2}}})^{^{\prime }}T+\sin \theta \frac{\tau }{\sqrt{\kappa
^{2}+\tau ^{2}}}(\kappa N)\right] +\cos \theta \left( -\kappa T+\tau
B\right)  \\
&&\mp \left[ \sin \theta (\frac{\kappa }{\sqrt{\kappa ^{2}+\tau ^{2}}}%
)^{^{\prime }}B+\sin \theta \frac{\kappa }{\sqrt{\kappa ^{2}+\tau ^{2}}}%
(-\tau N)\right]  \\
&=&\left[ -\kappa \mp \tan \theta (\frac{\tau }{\sqrt{\kappa ^{2}+\tau ^{2}}}%
)^{^{\prime }}\right] T+\left[ \tau \mp \tan \theta (\frac{\kappa }{\sqrt{%
\kappa ^{2}+\tau ^{2}}})^{^{\prime }}\right] B.
\end{eqnarray*}%
If we substitute Eq.(3.4) in the last equation above, we get%
\begin{eqnarray*}
d^{^{\prime }} &=&\left[ -\kappa +\frac{(\kappa ^{2}+\tau ^{2})^{3/2}}{\tau
^{^{\prime }}\kappa -\kappa ^{^{\prime }}\tau }\left( \frac{\tau ^{^{\prime
}}(\kappa ^{2}+\tau ^{2})-\tau (\kappa \kappa ^{^{\prime }}+\tau \tau
^{^{\prime }})}{(\kappa ^{2}+\tau ^{2})^{3/2}}\right) \right] T \\
&&+\left[ \tau +\frac{(\kappa ^{2}+\tau ^{2})^{3/2}}{\tau ^{^{\prime
}}\kappa -\kappa ^{^{\prime }}\tau }\left( \frac{\kappa ^{^{\prime }}(\kappa
^{2}+\tau ^{2})-\kappa (\kappa \kappa ^{^{\prime }}+\tau \tau ^{^{\prime }})%
}{(\kappa ^{2}+\tau ^{2})^{3/2}}\right) \right] B.
\end{eqnarray*}%
By a direct calculation, it can be seen that the coefficients of $T$ and $B$
are zero. Then $d^{^{\prime }}=0$ in other words $d$ is a constant vector.

\begin{theorem}
A unit speed curve $\alpha :I\subset 
\mathbb{R}
\longrightarrow \mathbb{E}^{3}$ is a slant helix if and only if%
\begin{equation*}
\sigma (s)=\mp \left( \frac{\kappa ^{2}}{(\kappa ^{2}+\tau ^{2})^{3/2}}(%
\frac{\tau }{\kappa })^{^{\prime }}\right) (s)
\end{equation*}%
is a constant function $[4]$.
\end{theorem}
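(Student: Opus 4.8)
The plan is to prove both implications, using the machinery already assembled in Section 3. For the forward direction, observe that the work preceding the theorem already does almost everything: assuming $\alpha$ is a slant helix with axis $d$ and constant angle $\theta$, equations (3.1)--(3.4) were derived, and (3.4) states precisely that $\cot\theta = \mp\frac{\kappa^2}{(\kappa^2+\tau^2)^{3/2}}\left(\frac{\tau}{\kappa}\right)' = \sigma(s)$. Since $\theta$ is constant by definition of a slant helix, $\cot\theta$ is a constant, hence $\sigma$ is a constant function. So the forward implication is essentially immediate from what precedes the statement; I would just assemble it cleanly, noting that $\kappa>0$ guarantees the Frenet frame is defined and that $\sin\theta\neq 0$ (otherwise $N$ would be parallel to $d$, forcing $N$ constant and $\kappa=0$, a contradiction), so that the manipulations leading to (3.4) are legitimate.

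The converse is the substantive part. Assume $\sigma(s) \equiv m$ for some constant $m \in \mathbb{R}$. I need to produce a fixed unit vector $d$ making a constant angle with $N$. The natural move is to \emph{define} $\theta$ by $\cot\theta = m$ (choosing $\theta \in (0,\pi)$, with the sign of $m$ dictating which of the $\mp$ branches to take), and then \emph{define} the candidate axis by the formula (3.3):
\[
d = \mp \frac{\tau}{\sqrt{\kappa^2+\tau^2}}\sin\theta\, T + \cos\theta\, N \mp \frac{\kappa}{\sqrt{\kappa^2+\tau^2}}\sin\theta\, B.
\]
Then I must verify two things: first, that $d$ is actually constant, i.e. $d' = 0$; second, that $\langle N, d\rangle = \cos\theta$ is constant, which is immediate from the formula since $\langle N,N\rangle = 1$ and $\langle N,T\rangle = \langle N,B\rangle = 0$. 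The constancy of $d$ is exactly the computation carried out in the lines just before the theorem: differentiating (3.3) with the Frenet equations gives
\[
d' = \left[-\kappa \mp \tan\theta \left(\tfrac{\tau}{\sqrt{\kappa^2+\tau^2}}\right)'\right] T + \left[\tau \mp \tan\theta \left(\tfrac{\kappa}{\sqrt{\kappa^2+\tau^2}}\right)'\right] B,
\]
and substituting $\tan\theta = \mp \frac{(\kappa^2+\tau^2)^{3/2}}{\tau'\kappa - \kappa'\tau}$ (which is the reciprocal form of $\cot\theta = m$, valid since the hypothesis $\sigma \equiv m$ with $m$ the relevant constant forces $\tau'\kappa - \kappa'\tau$ to have constant sign and be nonzero when $m\neq 0$) makes both bracketed coefficients vanish after the direct calculation already indicated in the text. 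Hence $d'=0$.

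The main obstacle — and the point where care is genuinely needed — is the degenerate case $m = 0$, i.e. $\left(\frac{\tau}{\kappa}\right)' \equiv 0$, so $\tau/\kappa$ is constant and $\alpha$ is a general helix. Then $\cot\theta = 0$ means $\theta = \pi/2$, so one is claiming $N$ is orthogonal to a fixed direction; here the formula (3.3) degenerates (the $\cos\theta\, N$ term drops out) and the denominator $\tau'\kappa - \kappa'\tau$ in (3.4) vanishes, so the substitution route breaks down and must be replaced by a direct argument: for a general helix the Darboux vector $\tau T + \kappa B$ (up to normalization) is the fixed axis, and one checks $\langle N, \tau T + \kappa B\rangle = 0$ directly from the Frenet equations. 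I would handle this as a separate short case. A secondary bookkeeping nuisance is tracking the two sign branches $\mp$ consistently and confirming $\|d\| = 1$ — but this is the same norm computation $\frac{\tau^2}{\kappa^2+\tau^2}\sin^2\theta + \cos^2\theta + \frac{\kappa^2}{\kappa^2+\tau^2}\sin^2\theta = \sin^2\theta + \cos^2\theta = 1$ already done above, so it carries over verbatim. With the $m\neq 0$ case covered by the formula-and-substitution argument and the $m = 0$ case by the general-helix observation, both implications are complete.
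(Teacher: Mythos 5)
Your proposal is correct, but it takes a genuinely different route from the paper's own proof. You prove the theorem directly from the axis: the forward implication reads $\sigma=\cot\theta$ off Eq.~(3.4), and for the converse you define $\theta$ by $\cot\theta=m$, take Eq.~(3.3) as the \emph{definition} of $d$, and verify $d'=0$ by the substitution computation, with the necessary case split at $m=0$ (where $\tan\theta$ and the denominator $\tau'\kappa-\kappa'\tau$ degenerate) handled by the general-helix/Darboux-vector observation $\langle N,\tau T+\kappa B\rangle=0$. The paper, by contrast, uses the pre-theorem material only to exhibit the axis, and proves the theorem by passing to the principal normal indicatrix $\beta=N$ on $\mathbb{S}^{2}$: $\alpha$ is a slant helix iff $\beta$ is a spherical circle, and computing $\kappa_{\beta}=\Vert N'\times N''\Vert/\Vert N'\Vert^{3}$ together with $k_{g}^{2}+k_{n}^{2}=\kappa_{\beta}^{2}$, $k_{n}=1$, identifies $\sigma$ as the geodesic curvature of $\beta$, so constancy of $\sigma$ is constancy of $k_{g}$. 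The paper's route is shorter and gives the geometric meaning of $\sigma$ (the geodesic curvature of the normal indicatrix, which is how Izumiya--Takeuchi phrase it); your route is more self-contained, since it never invokes the fact---used but not proved in the paper---that a spherical curve is a circle exactly when its geodesic curvature is constant, and your converse actually exhibits the fixed axis explicitly. You are also more careful about degeneracies ($\sin\theta\neq0$, the case $m=0$) than the preliminary computation you are reusing, which tacitly divides by $\cos\theta$ and by $\tau'\kappa-\kappa'\tau$. One small refinement: in the forward direction, derive the $\cot\theta$ form of (3.4) by dividing by $\sin\theta$ (legitimate by your own observation that $\sin\theta\neq0$) rather than by $\cos\theta$, so that the case $\theta=\pi/2$ is not excluded; with that, both implications are complete in all cases.
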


\begin{proof}
The vectors make a constant angle with a fixed vector construct a cone. Then
the unit vectors in $\mathbb{E}^{3}$ make a constant angle with a fixed
vector construct a circular cone that base curve lies on the unit sphere $%
\mathbb{S}^{2}$. Therefore, $\alpha $ is the unit speed slant helix if and
only if its principal normal indicatrix is a circle on the unit sphere $%
\mathbb{S}^{2}$. In other words, if we compute the normal indicatrix $\beta
:I\subset \mathbb{R}\longrightarrow \mathbb{S}^{2}$, $\beta (s)=N(s)$ along
the curve $\alpha $, the geodesic curvature of $\beta $ becomes $\sigma (s)$
as obtained below.%
\begin{eqnarray*}
N^{^{\prime }} &=&-\kappa T+\tau B \\
N^{^{\prime \prime }} &=&-\kappa ^{^{\prime }}T-(\kappa ^{2}+\tau
^{2})N+\tau ^{^{\prime }}B \\
N^{^{\prime }}\times N^{^{\prime \prime }} &=&\tau (\kappa ^{2}+\tau
^{2})T+\kappa ^{2}(\frac{\tau }{\kappa })^{^{\prime }}N+\kappa (\kappa
^{2}+\tau ^{2})B \\
\kappa _{\beta } &=&\frac{\left \Vert N^{^{\prime }}\times N^{^{\prime
\prime }}\right \Vert }{\left \Vert N^{^{\prime }}\right \Vert ^{3}} \\
&=&\sqrt{\dfrac{(\kappa ^{2}+\tau ^{2})^{3}+(\kappa ^{2}(\dfrac{\tau }{%
\kappa })^{^{\prime }})^{2}}{(\kappa ^{2}+\tau ^{2})^{3}}},
\end{eqnarray*}%
where $\kappa _{\beta }$ is the curvature of $\beta $. Let $k_{g}$ and $%
k_{n} $ be the geodesic curvature and normal curvature of $\beta $ on $%
\mathbb{S}^{2}$, respectively. Since the normal curvature $k_{n}=1$ on $%
\mathbb{S}^{2}$, if we substitute $k_{n}$ and $\kappa _{\beta }$ in the
following equation, we get the geodesic curvature $k_{g}$ as follows.%
\begin{gather*}
k_{g}^{2}+k_{n}^{2}=(\kappa _{\beta })^{2} \\
k_{g}(s)=\sigma (s)=\cot \theta =\mp \left( \frac{\kappa ^{2}}{(\kappa
^{2}+\tau ^{2})^{3/2}}(\frac{\tau }{\kappa })^{^{\prime }}\right) (s),
\end{gather*}%
where $\theta $ is the constant angle between the principal normal $N$ and
the axis $d$. In that case, a unit speed curve $\alpha :I\subset 
\mathbb{R}
\longrightarrow \mathbb{E}^{3}$ is a slant helix if and only if the
spherical image of the principal normal $N:I\subset \mathbb{R}%
\longrightarrow \mathbb{S}^{2}$ is a circle.
\end{proof}


\begin{thebibliography}{9}
\bibitem{ali} Ali, A.T., Turgut, M.: Position vector of a time-like slant
helix in Minkowski 3-space. J. Math. Anal. Appl. 365, 559--569 (2010)

\bibitem{ali} Ali, A.T., Turgut, M.: Some characterizations of slant helices
in the Euclidean space $\mathbb{E}^{n}$. Hacettepe Journal of Mathematics
and Statistics 39 (3), 327--336 (2010)

\bibitem{ali} Ali, A.T., Lopez, R.: Slant helices in minkowski space $%
\mathbb{E}_{1}^{3}$. J. Korean Math. Soc. 48 (1), 159-167 (2011)

\bibitem{izumiya} Izumiya S., Takeuchi, N.: New special curves and
developable surfaces, Turk. J. Math. 28, 153-163 (2004)

\bibitem{kula} Kula, L., Yayl\i , Y.: On slant helix and its spherical
indicatrix. Applied Mathematics and Computation 169 (1), 600-607 (2005)

\bibitem{kula} Kula, L., Ekmekci, N., Yayl\i , Y., \.{I}larslan K.:
Characterizations of slant helices in Euclidean 3-space. Turk. J. Math. 34,
261--273 (2010)

\bibitem{monterde} Monterde, J.: Salkowski curves revisited, A family of
curves with constant curvature and non-constant torsion, Computer Aided
Geometric Design 26,\textbf{\ }271-278 \textbf{(}2009)
\end{thebibliography}
\end{document}